\setlist[enumerate]{nosep}
\definecolor{labelkey}{rgb}{0,0.08,0.45}
\definecolor{refkey}{rgb}{0,0.6,0.0}
\definecolor{Brown}{rgb}{0.45,0.0,0.05}
\definecolor{lime}{rgb}{0.00,0.8,0.0}
\definecolor{lblue}{rgb}{0.5,0.5,0.99}
\newcommand{\seppfive}{\setlength{\itemsep}{-5pt}}
\newcommand{\bId}{\ensuremath{\operatorname{\mathbf{Id}}}}
\newcommand{\bM}{\ensuremath{{\mathbf{M}}}}
\newcommand{\bb}{\ensuremath{\mathbf{b}}}
\newcommand{\bx}{\ensuremath{\mathbf{x}}}
\newcommand{\bzero}{\ensuremath{{\boldsymbol{0}}}}
\providecommand{\siff}{\Leftrightarrow}
\newcommand{\nnn}{\ensuremath{{n\in{\mathbb N}}}}
\newcommand{\menge}[2]{\big\{{#1}~\big |~{#2}\big\}}
\newcommand{\To}{\ensuremath{\rightrightarrows}}
\newcommand{\fenv}[1]%
{\ensuremath{\,\overrightarrow{\operatorname{env}}_{#1}}}
\newcommand{\benv}[1]%
{\ensuremath{\,\overleftarrow{\operatorname{env}}_{#1}}}
\newcommand{\scal}[2]{\left\langle{#1},{#2}  \right\rangle}
\newcommand{\RR}{\ensuremath{\mathbb R}}
\newcommand{\dom}{\ensuremath{\operatorname{dom}}}
\newcommand{\ran}{\ensuremath{\operatorname{ran}}}
\newcommand{\Id}{\ensuremath{\operatorname{Id}}}
\providecommand{\vcomp}{v_{T_m\cdots T_2 T_1}}
\newcommand{\pinf}{\ensuremath{+\infty}}
\newcommand{\bX}{\ensuremath{{\mathbf{X}}}}
\newcommand{\bR}{\ensuremath{{\mathbf{R}}}}
\newcommand{\bT}{\ensuremath{{\mathbf{T}}}}
\newcommand{\bD}{\ensuremath{{\boldsymbol{\Delta}}}}
\newcommand{\bA}{\ensuremath{{\mathbf{A}}}}
\newcommand{\bc}{\ensuremath{\mathbf{c}}}
\newcommand{\by}{\ensuremath{\mathbf{y}}}
\crefname{equation}{}{equations}
\crefname{chapter}{Appendix}{chapters}
\crefname{item}{}{items}
\crefname{enumi}{}{}
\newtheorem{theorem}{Theorem}[section]
\newtheorem{lem}[theorem]{Lemma}
\newtheorem{cor}[theorem]{Corollary}
\newtheorem{prop}[theorem]{Proposition}
\newtheorem{thm}[theorem]{Theorem}
\newtheorem{example}[theorem]{Example}
\providecommand{\abs}[1]{\lvert#1\rvert}
\providecommand{\norm}[1]{\lVert#1\rVert}
\providecommand{\RA}{\Rightarrow}
\providecommand{\eps}{\varepsilon}
\newcommand{\bv}{\ensuremath{{\mathbf{v}}}}
\providecommand{\lam}{\lambda}
\providecommand{\RR}{\mathbb{R}}
\providecommand{\ran}{\operatorname{ran}}
\providecommand{\dom}{\operatorname{dom}}
\newcommand{\fix}{\ensuremath{\operatorname{Fix}}}
\providecommand{\gr}{\operatorname{gra}}
\providecommand{\gra}{\operatorname{gra}}
\providecommand{\Id}{\operatorname{{ Id}}}
\providecommand{\fady}{\varnothing}
\providecommand{\To}{\rightrightarrows}
\providecommand{\gr}{\operatorname{gra}}
\providecommand{\fix}{\operatorname{Fix}}
\providecommand{\ran}{\operatorname{ran}}
\providecommand{\Id}{\operatorname{Id}}
\providecommand{\fady}{\varnothing}
\providecommand{\RR}{\mathbb{R}}
\definecolor{myblue}{rgb}{.8, .8, 1}
  \newcommand*\mybluebox[1]{%
    \colorbox{myblue}{\hspace{1em}#1\hspace{1em}}}
\begin{document}

%

\author{
Heinz H.\ Bauschke\thanks{
Mathematics, University
of British Columbia,
Kelowna, B.C.\ V1V~1V7, Canada. E-mail:
\texttt{heinz.bauschke@ubc.ca}.}
~and~ Walaa M.\ Moursi\thanks{
Simons Institute for the Theory of Computing, UC Berkeley,
Melvin Calvin Laboratory, \#2190, Berkeley, CA 94720, USA
and 
Mansoura University, Faculty of Science, Mathematics Department, 
Mansoura 35516, Egypt. 
E-mail: \texttt{walaa.moursi@gmail.com}.}
}

\title{ \textsc
The magnitude of the minimal displacement vector \\
for compositions and 
convex combinations\\ of firmly nonexpansive mappings}


\date{December 1, 2017}

\maketitle

\begin{abstract}
\noindent
Maximally monotone operators and firmly nonexpansive mappings play key
roles in modern optimization and nonlinear analysis. 
Five years ago, it was shown that if finitely many firmly
nonexpansive operators are all asymptotically regular (i.e., the
have or ``almost have'' fixed points), then the same is true for
compositions and convex combinations.

In this paper, we derive bounds on the magnitude of 
the minimal displacement
vectors of compositions and of convex combinations in terms of
the displacement vectors of the underlying operators. Our results
completely generalize earlier works. Moreover, we present various
examples illustrating that our bounds are sharp. 
\end{abstract}
{ 
\noindent
{\bfseries 2010 Mathematics Subject Classification:}
{Primary 
47H05, 
47H09; 
Secondary 
47H10, 
90C25. 
}

\noindent {\bfseries Keywords:}
Asymptotically regular, 
firmly nonexpansive mapping, 
maximally monotone operator,
minimal displacement vector, 
nonexpansive mapping,
resolvent. 
}

\section{Introduction and Standing Assumptions}

Throughout this paper,

\begin{empheq}[box=\mybluebox]{equation}
\text{$X$ is
a real Hilbert space with inner
product $\scal{\cdot}{\cdot}$ }
\end{empheq}
and induced norm $\|\cdot\|$.
Recall that $T\colon X\to X$ is 
\emph{firmly nonexpansive} (see, e.g., \cite{BC2017}, \cite{GK}, and
\cite{GR} for further information)
if $(\forall (x,y)\in X\times X)$ 
$\|Tx-Ty\|^2\leq \scal{x-y}{Tx-Ty}$ and
that a set-valued operator $A\colon X\To X$ is 
\emph{maximally monotone} if it is \emph{monotone}, i.e., 
$\{(x,x^*),(y,y^*)\}\subseteq \gra A
\RA
\scal{x-y}{x^*-y^*}\geq 0$ and if the graph of $A$ cannot be properly
enlarged without destroying monotonicity\footnote{
We shall write $\dom A = \menge{x\in X}{Ax\neq\varnothing}$
for the \emph{domain} of $A$, $\ran A = A(X) = \bigcup_{x\in X}Ax$ for
the \emph{range} of $A$, and 
$\gr A=\menge{(x,u)\in X\times X}{u\in Ax}$ 
for the \emph{graph} of
$A$.}. 
These notions are equivalent (see \cite{Minty} and \cite{EckBer}) 
in the sense that 
if $A$ is maximally monotone, then
its \emph{resolvent} $J_A := (\Id+A)^{-1}$ is
firmly nonexpansive, and if $T$ is firmly nonexpansive,
then $T^{-1}-\Id$ is maximally monotone\footnote{
Here and elsewhere, $\Id$ denotes the \emph{identity} operator on $X$.}.

In optimization, one main problem is to find zeros of (sums of) 
maximally monotone
operators --- these zeros may correspond to critical points or solutions to
optimization problems. In terms of resolvents, the corresponding problem is
that of finding fixed points. 
For background material in fixed point theory and monotone operator
theory, we refer the reader to 
\cite{BC2017},
\cite{BorVanBook}, 
\cite{Brezis},
\cite{BurIus},
\cite{GK},
\cite{GR}, 
\cite{Rock70},
\cite{Rock98},
\cite{Simons1},
\cite{Simons2},
\cite{Zalinescu},
\cite{Zeidler2a},
\cite{Zeidler2b},
and \cite{Zeidler1}.
However, not every problem has a solution; equivalently, not
every resolvent has a fixed point.
To make this concrete, 
let us assume that $T\colon X\to X$ is firmly nonexpansive.
The deviation from $T$ possessing a fixed point is captured by the
notion of the \emph{minimal (negative) displacement vector}
which is well defined by\footnote{Given a nonempty closed convex
subset $C$ of $X$, we denote its \emph{projection mapping} or projector
by $P_C$.}
\begin{empheq}[box=\mybluebox]{equation}
v_T := 
P_{\overline{\ran}(\Id-T)}(0). 
\end{empheq}
If $T$ ``almost'' has a fixed point in the sense that 
$v_T = 0$, i.e., $0\in\overline{\ran}{(\Id-T)}$,  
then we say that $T$ is \emph{asymptotically regular}. 
From now on, we assume that 
\begin{empheq}[box=\mybluebox]{equation*}
\text{$I := \{1,2,\ldots,m\}$, where $m\in\{2,3,4,\ldots\}$ }
\end{empheq}
and that 
we are given $m$ firmly nonexpansive operators $T_1,\ldots,T_m$;
equivalently, $m$ resolvents of maximally monotone operators
$A_1,\ldots,A_m$:
\begin{empheq}[box=\mybluebox]{equation*}
(\forall i\in I)\quad T_i = J_{A_i} = (\Id+A_i)^{-1}
\;\;\text{is firmly nonexpansive,}
\end{empheq}
and we abbreviate the corresponding 
minimal displacement vectors by 
\begin{empheq}[box=\mybluebox]{equation}
\label{eq:assum:in}
(\forall i\in I)\quad v_i:= v_{T_i} =
P_{\overline{\ran}(\Id-T_i)}(0). 
\end{empheq}

A natural question is the following: 
\emph{What can be said about the minimal displacement vector of
$T$ when $T$ is either a composition or a convex combination of
$T_1,\ldots,T_n$?}

Five years ago, the authors of \cite{BMMW12} proved the
following: 
\begin{quotation}
\noindent
{\em 
If each $T_i$ is asymptotically regular, then so are
the corresponding compositions and convex combinations.
}
\end{quotation}
This can be expressed equivalently as
\begin{equation}
(\forall i\in I)\;\; v_i = 0
\quad\Rightarrow\quad v_T=0,
\end{equation}
where $T$ is either a composition or a convex combination of the
family $(T_i)_{i\in I}$. 
It is noteworthy that these results have been studied 
recently by Kohlenbach \cite{KGA17} and \cite{Kohlen17} 
from the viewpoint of ``proof mining''. 

In this work, we 
obtain \emph{sharp bounds} on the magnitude of the minimal
displacement vector of $T$ that hold true 
\emph{without any assumption of asymptotic regularity of the
given operators.}
The proofs rely on techniques that are new and that were 
introduced in 
\cite{BMMW12} and \cite{Bau03} (where projectors were
considered). 
The new results concerning compositions are presented in
Section~\ref{s:compo} while convex combinations are dealt with in
Section~\ref{s:convco}. Finally, 
our notation is standard and follows \cite{BC2017} to which we
also refer for standard facts not mentioned here. 

\section{Compositions}
\label{s:compo}
In this section, 
 we explore compositions. 

\begin{prop}
\label{t:main}
\label{prop:comp:v:eps}
$(\forall \eps>0)$
$(\exists x\in X)$
such that 
$\norm{x-T_mT_{m-1}\cdots T_1x}\le \eps +\sum_{k=1}^{m}\norm{v_k}$.
\end{prop}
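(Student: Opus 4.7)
The key idea is a translation trick that reduces the general statement to the asymptotically regular case already handled by \cite{BMMW12}. Namely, for each $k\in I$ I will introduce the shifted operator
\begin{equation*}
\widetilde{T}_k \colon X\to X, \qquad \widetilde{T}_k x := T_k x + v_k.
\end{equation*}
Two observations drive the argument. First, $\widetilde{T}_k$ is firmly nonexpansive because the additive constant $v_k$ cancels in differences, so $\widetilde{T}_k x - \widetilde{T}_k y = T_k x - T_k y$ inherits firm nonexpansiveness from $T_k$. Second, $\widetilde{T}_k$ is asymptotically regular: one has
\begin{equation*}
\overline{\operatorname{ran}}(\Id - \widetilde{T}_k) = \overline{\operatorname{ran}(\Id - T_k) - v_k} = \overline{\operatorname{ran}}(\Id - T_k) - v_k,
\end{equation*}
which contains $0$ precisely because $v_k\in \overline{\operatorname{ran}}(\Id - T_k)$ by \eqref{eq:assum:in}.

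Having these two facts in hand, I invoke the composition result of \cite{BMMW12} stated in the introduction: the composition $\widetilde{T} := \widetilde{T}_m \widetilde{T}_{m-1}\cdots \widetilde{T}_1$ of finitely many asymptotically regular firmly nonexpansive operators is again asymptotically regular. Therefore, given $\varepsilon>0$, there exists $x\in X$ with $\|x - \widetilde{T} x\| < \varepsilon$.

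It remains to control the discrepancy between the shifted and original compositions at that point $x$. Setting $y_k := T_k T_{k-1}\cdots T_1 x$ and $\widetilde{y}_k := \widetilde{T}_k \widetilde{T}_{k-1}\cdots \widetilde{T}_1 x$ with $y_0 = \widetilde{y}_0 = x$, one computes
\begin{equation*}
\widetilde{y}_k - y_k = (T_k \widetilde{y}_{k-1} - T_k y_{k-1}) + v_k,
\end{equation*}
so nonexpansiveness of $T_k$ and the triangle inequality give $\|\widetilde{y}_k - y_k\| \le \|\widetilde{y}_{k-1} - y_{k-1}\| + \|v_k\|$. A trivial induction then yields $\|\widetilde{y}_m - y_m\| \le \sum_{k=1}^{m}\|v_k\|$. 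Combining with the asymptotic regularity bound $\|x - \widetilde{y}_m\| < \varepsilon$ produces
\begin{equation*}
\|x - T_m T_{m-1}\cdots T_1 x\| \le \|x - \widetilde{y}_m\| + \|\widetilde{y}_m - y_m\| < \varepsilon + \sum_{k=1}^m \|v_k\|,
\end{equation*}
as required.

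The substantive step is recognizing the translation $T_k \mapsto T_k + v_k$, after which BMMW12 does the heavy lifting; the discrepancy estimate between the original and shifted composed iterates is then an elementary telescoping argument, so no real obstacle arises beyond spotting the correct reduction.
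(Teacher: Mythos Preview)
Your proof is correct, and it takes a genuinely different route from the paper's.

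Both arguments begin with the same translation $\widetilde{T}_k := T_k + v_k$, but then diverge. You invoke the main composition result of \cite{BMMW12} (the asymptotic regularity of $\widetilde{T}_m\cdots\widetilde{T}_1$) as a black box, and then run a clean telescoping estimate comparing $\widetilde{T}_m\cdots\widetilde{T}_1 x$ with $T_m\cdots T_1 x$. The paper instead works in the product space $X^m$ with the cyclic right-shift $\mathbf{R}$: it shows $\mathbf{v}\in\overline{\ran}(\mathbf{A}(\cdot-\mathbf{v})+\mathbf{M})$ via \cite[Corollary~2.6]{BMMW12}, extracts points $x_i = c_i + v_i + T_i x_{i-1}$ with $\sum_i\|c_i\|\le\varepsilon$, and telescopes along the cycle $x_0=x_m,x_1,\ldots,x_m$. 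What your approach buys is brevity and a conceptual payoff: it makes transparent that the quantitative bound in the proposition is in fact \emph{equivalent} to the qualitative asymptotic-regularity result of \cite{BMMW12}, via the translation trick. What the paper's approach buys is a proof that does not rely on the headline theorem of \cite{BMMW12} (only on an auxiliary range identity from that paper), so that \cite[Corollary~3.2]{BMMW12} genuinely falls out as a corollary rather than being assumed; it also produces the explicit ``approximate cycle'' $(x_i)_{i\in I}$, which carries slightly more structural information than a single near-fixed point of the shifted composition.
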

\begin{proof}
The proof is broken up into several steps.
Set 
\begin{equation}
 \label{eq:A:tilde}
(\forall i\in I)\quad \widetilde{A}_i:=-v_i+A_i(\cdot-v_i).
\end{equation}
and observe that \cite[Proposition~23.17(ii)\&(iii)]{BC2017}
yields
\begin{equation}
\label{eq:res:op}
(\forall i\in I)\quad \widetilde{T}_i:=J_{ \widetilde{A}_i}
=v_i+J_{A_i}=v_i+T_i.
\end{equation}
We also work in 
\begin{equation}
\bX := X^m = \menge{\bx = (x_i)_{i\in I}}{(\forall i\in I)\;
x_i\in X}, \quad\text{with}\;\;
\scal{\bx}{\by} = \sum_{i\in I}\scal{x_i}{y_i}, 
\end{equation}
where we embed the original operators via
\begin{equation}
\bT\colon X^m\to X^m\colon (x_i)_{i\in I}\mapsto (T_ix_i)_{i\in I}
\;\;\text{and}\;\;
\bA\colon X^m\To X^m\colon (x_i)_{i\in I}\mapsto
\times(A_ix_i)_{i\in I}.
\end{equation}
Denoting the identity on $X^m$ by $\bId$, we observe that
\begin{equation}
\label{eq:def:bT}
J_{\bA} = (\bId + \bA)^{-1} = T_1\times\cdots \times T_m = \bT. 
\end{equation}
Because
${\ran}(\bId-\bT)=\ran(\Id-T_1) \times \dots \times
\ran(\Id-T_m)$ and hence 
$\overline{\ran}(\bId-\bT)=\overline{\ran}(\Id-T_1) \times \dots
\times \overline{\ran}(\Id-T_m)$, we have 
(e.g., by using \cite[Proposition~29.3]{BC2017})
\begin{equation}
\label{eq:def:bv}
\bv:=(v_i)_{i\in I}=P_{\overline{\ran}(\bId-\bT)}\bzero.
\end{equation}
Finally, 
define the \emph{cyclic right-shift operator}
\begin{equation}
\bR\colon X^m\to X^m\colon (x_1,x_2,\ldots,x_m)\mapsto
(x_m,x_1,\ldots,x_{m-1}) 
\;\;\text{and}\;\;
\bM \coloneqq \bId-\bR, 
\end{equation}
and the \emph{diagonal} subspace 
\begin{equation}
\bD := \menge{\bx= (x)_{i\in I}}{x\in X},
\end{equation}
with orthogonal complement $\bD^\perp$. 

\textsc{Claim 1:} $\bv\in\overline{\ran}\ (\bA(\cdot-\bv)+\bM)$.\\
Indeed, \cref{eq:assum:in} implies that 
$(\forall i\in I)$ $v_i\in\overline{\ran }\ (\Id-T_i)
=\overline{\ran}\ (\Id-J_{A_i})
=\overline{\ran}\ J_{A_i^{-1}}
=\overline{\dom}\ (\Id +A_i^{-1})
=\overline{\dom}\ A_i^{-1}
=\overline{\ran}\ A_i
=\overline{\ran}\ A_i(\cdot-v_i)
$. 
Hence, $\bv\in \overline{\ran}\ \bA(\cdot-\bv)
=\overline{\ran \bA(\cdot-\bv)+\bzero}
\subseteq
\overline{\ran \bA(\cdot-\bv)+\bD^\perp}
$. 
On the other hand, we 
learn  from \cite[Corollary~2.6]{BMMW12}
(applied to $\bA(\cdot-\bv)$)
that 
$\overline{\ran}\ (\bA(\cdot-\bv)+\bM)=\overline{\ran\bA(\cdot-\bv)+\bD^\perp}$. 
Altogether, we obtain that 
$\bv\in\overline{\ran}\ (\bA(\cdot-\bv)+\bM)$ and
\textsc{Claim~1} is verified. 

\textsc{Claim~2:}
$(\forall\varepsilon>0)$
$(\exists (\bb,\bx)\in\bX\times \bX)$
$\|\bb\|\leq\varepsilon$
and 
$\bx=\bv+\bT(\bb+\bR\bx)$.\\
Fix $\varepsilon>0$. 
In view of \textsc{Claim~1}, 
there
exists $\bx\in\bX$ and $\bb\in\bX$ such that
$\|\bb\|\leq\varepsilon$ 
and $\bb\in -\bv+ \bA(\bx-\bv)+\bM\bx$. 
Hence, $\bb+\bR\bx=\bb+\bx-\bM \bx \in \bx+\bA(\bx-\bv)-\bv
=(\bId+(-\bv+\bA(\cdot-\bv))\bx$. Thus,
$\bx = J_{-\bv+\bA(\cdot-\bv)}(\bb+\bR\bx) = \bv+\bT(\bb+\bR\bx)$,
where the last identity follows from
\cref{eq:res:op},
\cref{eq:def:bT} and \cref{eq:def:bv}.

\textsc{Claim~3}:
$(\forall\varepsilon>0)$
$(\exists (\bc,\bx)\in\bX\times \bX)$
$\|\bc\|\leq\varepsilon$
and 
$\bx=\bc+\bv+ \bT(\bR\bx)$.\\
Fix $\varepsilon>0$, 
let $\bb$ and $\bx$ be as in \textsc{Claim~2}, 
and set 
$\bc := \bx-\bv-\bT(\bR\bx)=
\bT(\bb+\bR\bx) - \bT(\bR\bx)$.
Then, since $\bT$ is nonexpansive, 
$\|\bc\| = \|\bT(\bb+\bR\bx) - \bT(\bR\bx)\|
\leq \|\bb\|\leq\varepsilon$, and \textsc{Claim~3} thus holds.

\textsc{Conclusion:}\\
Let $\varepsilon > 0$.
By
\textsc{Claim~3} (applied to $\varepsilon/\sqrt{m}$),  there exists 
$(\bc,\bx)\in \bX\times \bX$
such that $\norm{\bc}\le \eps/\sqrt{m}$
and $\bx=\bc+\bv+ \bT(\bR\bx)$.
Hence $\sum_{i\in I} \|c_i\| \leq \|\bc\|\sqrt{m} \leq
\varepsilon$ and
$(\forall i\in I)$ 
$x_i=c_i+v_i+T_ix_{i-1}$,
where $x_0:=x_m$.
The triangle inequality and the nonexpansiveness
of each $T_i$ thus yields
\begin{align}
\norm{T_mT_{m-1}\cdots T_1 x_0-x_0}
&=\norm{T_mT_{m-1}\cdots T_1 x_0-x_m}
\nonumber
\\
&=\big\|T_mT_{m-1}\cdots T_2T_1 x_0-T_mT_{m-1}\cdots T_2x_1
\nonumber
\\
&\qquad 
+T_mT_{m-1}\cdots T_3T_2x_1-T_mT_{m-1}\cdots T_3x_2
\nonumber
\\
&\qquad 
+T_mT_{m-1}\cdots T_4T_3x_2-T_mT_{m-1}\cdots T_4x_3
\nonumber
\\
&\qquad +\cdots
\nonumber
\\
&\qquad 
+T_mT_{m-1}x_{m-2} -T_mx_{m-1}
\nonumber
\\
&\qquad 
+T_mx_{m-1}-x_m\big\|
\nonumber
\\
&\le 
\norm{T_mT_{m-1}\cdots T_2 T_1 x_0-T_mT_{m-1}\cdots T_2x_1}
\nonumber
\\
&\qquad 
+\norm{T_mT_{m-1}\cdots T_3T_2x_1-T_mT_{m-1}\cdots T_3x_2}
\nonumber
\\
&\qquad 
+\norm{T_mT_{m-1}\cdots T_4T_3x_2-T_mT_{m-1}\cdots T_4x_3}
\nonumber
\\
&\qquad +\cdots
\nonumber
\\
&\qquad 
+\norm{T_mT_{m-1}x_{m-2} -T_mx_{m-1}}
\nonumber
\\
&\qquad 
+\norm{T_mx_{m-1}-x_m}
\nonumber
\\
&\le 
\norm{T_1 x_0-x_1}
+\norm{T_2x_1-x_2}
+\norm{T_3x_2-x_3}
\nonumber
\\
&\qquad+\cdots
+\norm{T_{m-1}x_{m-2}-x_{m-1}}
+\norm{T_mx_{m-1}-x_m}
\nonumber
\\
&=\norm{c_1+v_1}+\norm{c_2+v_2}+\cdots+\norm{c_m+v_m}
\nonumber
\\
&\le \sum_{k=1}^{m}\norm{c_i}+ \sum_{k=1}^{m}\norm{v_i}\notag\\
&\le 
\eps+ \sum_{k=1}^{m}\norm{v_i},
\end{align}
as claimed. 
\end{proof}

We are now ready for our first main result. 

\begin{thm}
\label{cor:comp}
$\displaystyle \norm{\vcomp}\le
\norm{v_{T_1}}+\cdots+\norm{v_{T_m}}$. 
\end{thm}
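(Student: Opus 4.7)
The plan is to deduce the theorem as an essentially immediate corollary of Proposition~\ref{prop:comp:v:eps} together with the standard fact that the norm of the minimal displacement vector of a nonexpansive mapping equals its infimal displacement.

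First I would set $T := T_m T_{m-1} \cdots T_1$ and recall that, since $v_T = P_{\overline{\ran}(\Id-T)}(0)$ by definition, we have
\begin{equation*}
\|v_T\| \;=\; d\bigl(0,\,\overline{\ran}(\Id-T)\bigr) \;=\; \inf_{y\in \ran(\Id-T)} \|y\| \;=\; \inf_{x\in X}\|x-Tx\|,
\end{equation*}
where the second equality uses that the distance to a set and to its closure coincide.

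Next, I would invoke Proposition~\ref{prop:comp:v:eps}: for every $\varepsilon>0$ there exists $x \in X$ with $\|x - Tx\| \le \varepsilon + \sum_{k=1}^{m}\|v_k\|$. Taking the infimum over $x$ gives
\begin{equation*}
\|v_T\| \;\le\; \varepsilon + \sum_{k=1}^{m}\|v_k\|.
\end{equation*}

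Finally, letting $\varepsilon \downarrow 0$ yields the desired bound $\|v_T\| \le \sum_{k=1}^m \|v_{T_k}\|$. There is no real obstacle here: the entire content of the theorem has been absorbed into Proposition~\ref{prop:comp:v:eps}, and the theorem is just the statement that the pointwise bound on displacements transfers to the infimal displacement, which is immediate from the characterization of $\|v_T\|$ as $\inf_{x}\|x-Tx\|$.
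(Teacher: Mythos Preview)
Your proof is correct and follows essentially the same approach as the paper: both deduce the bound directly from Proposition~\ref{prop:comp:v:eps} by noting that the existence of an $x$ with $\|x-Tx\|\le \varepsilon+\sum_k\|v_k\|$ forces $\|v_T\|\le \varepsilon+\sum_k\|v_k\|$, and then letting $\varepsilon\downarrow 0$. The paper's proof is just a two-line version of yours, leaving the identity $\|v_T\|=\inf_{x\in X}\|x-Tx\|$ implicit.
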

\begin{proof}
By \cref{prop:comp:v:eps}, we have
$(\forall \eps>0)$ 
$\norm{\vcomp}\leq\eps+\norm{v_{T_1}}+\cdots+\norm{v_{T_m}}$ 
and the result thus follows.
\end{proof}

As an immediate consequence of \cref{cor:comp},
we obtain the first main result of \cite{BMMW12}:

\begin{cor}
{\rm \cite[Corollary~3.2]{BMMW12}}
\label{cor:Victoria:1:comp}
Suppose that $v_1=\cdots=v_m=0$.
Then $\vcomp=0$.
\end{cor}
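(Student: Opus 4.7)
The plan is essentially a one-line deduction from \cref{cor:comp}, which has just been established. Under the hypothesis $v_1 = \cdots = v_m = 0$, the upper bound on the right-hand side of the inequality
\[
\|\vcomp\| \le \|v_{T_1}\| + \cdots + \|v_{T_m}\|
\]
collapses to zero. Since $\|\vcomp\| \ge 0$ always holds, this forces $\|\vcomp\| = 0$, i.e., $\vcomp = 0$, which is exactly asymptotic regularity of the composition $T_m T_{m-1} \cdots T_1$.

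Alternatively, one could bypass \cref{cor:comp} and invoke \cref{prop:comp:v:eps} directly: for every $\varepsilon > 0$ the proposition produces some $x \in X$ with $\|x - T_m T_{m-1} \cdots T_1 x\| \le \varepsilon + \sum_{k=1}^m \|v_k\| = \varepsilon$, so $0 \in \overline{\ran}(\Id - T_m T_{m-1} \cdots T_1)$, whence $\vcomp = P_{\overline{\ran}(\Id - T_m T_{m-1} \cdots T_1)}(0) = 0$. Either route is immediate and requires no further work beyond what is already in the excerpt.

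There is no real obstacle here; the corollary is included chiefly to recover the earlier qualitative result \cite[Corollary~3.2]{BMMW12} as a special case of the new quantitative bound, thereby confirming that \cref{cor:comp} is a strict generalization.
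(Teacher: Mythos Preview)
Your proposal is correct and matches the paper's approach exactly: the paper presents this corollary as an immediate consequence of \cref{cor:comp} without even writing out a formal proof. Your primary route (specializing the bound in \cref{cor:comp} to the case $v_1=\cdots=v_m=0$) is precisely what the paper intends, and your alternative route via \cref{prop:comp:v:eps} is an equally valid shortcut.
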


We now show that 
the bound on $\norm{\vcomp}$ given in \cref{cor:comp}
is sharp:

\begin{example}
Suppose that $X=\RR$,
$T_1\colon X\to X\colon x\mapsto x-a_1$,
and 
$T_2\colon X\to X\colon x\mapsto x-a_2$,
where $(a_1,a_2) \in \RR\times \RR$.
Then  $(v_{T_1},v_{T_2},v_{T_2T_1})=(a_1,a_2,
a_1+a_2)$
and 
$\abs{a_1+a_2}=\abs{v_{T_2T_1}}\le\abs{v_1}+\abs{v_2}=\abs{a_1}+\abs{a_2}$;
moreover, 
the inequality is an 
equality if and only if $a_1a_2\geq 0$. 
\end{example}

\begin{proof}
On the one hand, it is clear that
$\ran (\Id-T_1)=\{a_1\}$
 and likewise
 $\ran (\Id-T_2)=\{a_2\}$.
 Consequently,
 $(v_1,v_2)=(a_1,a_2)$.
On the other hand,
 $T_2T_1\colon X\to X\colon x\mapsto x-a_1
 -a_2=x-(a_1+a_2)$,
therefore
 $\ran (\Id-T_2T_1)=\{a_1+a_2\}$.
 Hence, $v_{T_2T_1}=a_1+a_2$,
 $\abs{v_{T_2T_1}}=\abs{a_1+a_2}$  
 and 
 $\abs{v_1}+\abs{v_2}=\abs{a_1}+\abs{a_2}$,
  and the conclusion follows.
\end{proof}

The remaining results in this section concern the effect of
cyclically permuting 
the operators in the composition.

\begin{prop}
\label{subeq:1st}
$v_{T_mT_{m-1}\cdots  T_2T_1}
=v_{T_{m-1}T_{m-2}\cdots T_1T_{m}}
=\cdots
=v_{T_1T_{m}\cdots T_2}$.
\end{prop}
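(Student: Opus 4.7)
The proof hinges on a single algebraic intertwining. Write $U := T_m T_{m-1}\cdots T_1 = T_m W$ with $W := T_{m-1}\cdots T_1$, and the adjacent cyclic permutation as $V := T_{m-1}\cdots T_1 T_m = W T_m$. Then
\begin{equation*}
U T_m = T_m W T_m = T_m V,
\end{equation*}
so $U$ and $V$ are intertwined by $T_m$. The same relation holds between any two adjacent cyclic permutations, intertwined by whichever $T_i$ has just moved from the end of the composition to its beginning, so by transitivity it suffices to prove $v_U = v_V$.

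First I would derive the norm equality. For any $x\in X$ the intertwining gives $T_m x - U(T_m x) = T_m x - T_m V x$, and nonexpansiveness of $T_m$ yields $\|T_m x - U T_m x\| \leq \|x - V x\|$. Taking infima and recalling that $\|v_T\| = \inf_y \|y - T y\|$, we get $\|v_U\| \leq \|v_V\|$. Iterating around the cycle of all $m$ cyclic permutations forces the chain of inequalities to close up, hence all norms $\|v_{U_k}\|$ coincide; in particular $\|v_U\|=\|v_V\|$.

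Upgrading to equality of vectors is the main obstacle, since nonexpansiveness alone only bounds norms; here the firm nonexpansiveness of each $T_i$ enters crucially. Choose a minimizing sequence $(x_n)$ with $x_n - V x_n \to v_V$, which converges strongly by the classical Hilbert-space parallelogram argument for projections onto the closed convex set $\overline{\ran}(\Id - V)$. Set $y_n := T_m x_n$, so that $y_n - U y_n = T_m x_n - T_m V x_n \in \ran(\Id - U)$; the sandwich
\begin{equation*}
\|v_U\| \leq \|y_n - U y_n\| \leq \|x_n - V x_n\| \to \|v_V\| = \|v_U\|
\end{equation*}
shows that $(y_n - U y_n)$ is itself a minimizing sequence for $\overline{\ran}(\Id - U)$ and therefore converges strongly to $v_U$. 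Firm nonexpansiveness of $T_m$ applied to the pair $(x_n, V x_n)$ gives
\begin{equation*}
\|y_n - U y_n\|^2 = \|T_m x_n - T_m V x_n\|^2 \leq \langle x_n - V x_n,\, T_m x_n - T_m V x_n\rangle = \langle x_n - V x_n,\, y_n - U y_n\rangle.
\end{equation*}
Passing to the limit yields $\|v_U\|^2 \leq \langle v_V, v_U \rangle$, while Cauchy--Schwarz combined with $\|v_U\|=\|v_V\|$ gives $\langle v_V, v_U\rangle \leq \|v_U\|^2$. So equality holds in Cauchy--Schwarz, $v_U$ and $v_V$ are nonnegatively proportional, and equal norms then force $v_U = v_V$. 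Chaining this pairwise identity around all $m$ cyclic permutations completes the proof.
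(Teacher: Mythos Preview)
Your proof is correct, and it takes a genuinely different route from the paper's.

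The paper first isolates a two--operator lemma: for \emph{averaged} $S_1,S_2$ one has $v_{S_2S_1}=v_{S_1S_2}$. Its proof runs through Picard iterates, invoking the known asymptotic fact $(S_2S_1)^n x-(S_2S_1)^{n+1}x\to v_{S_2S_1}$, an averaged--operator inequality of the form $\|(\Id-S_1)a-(\Id-S_1)b\|^2\le\tfrac{\alpha}{1-\alpha}(\|a-b\|^2-\|S_1a-S_1b\|^2)$, and a separately cited lemma giving $\|v_{S_2S_1}\|=\|v_{S_1S_2}\|$. Your argument instead (i) obtains the norm equalities directly by chasing the obvious chain of inequalities around the full cycle, and (ii) upgrades to vector equality by picking any sequence with $x_n-Vx_n\to v_V$, pushing it through $T_m$, and exploiting firm nonexpansiveness of $T_m$ together with the equality case of Cauchy--Schwarz. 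This is more elementary and entirely self--contained: no Picard iteration, no external citations. The paper's version, on the other hand, yields the slightly stronger standalone statement $v_{S_2S_1}=v_{S_1S_2}$ for arbitrary averaged $S_1,S_2$, whereas your firm--nonexpansiveness step uses that the single operator being cycled ($T_m$, then $T_{m-1}$, \ldots) is firmly nonexpansive. For the proposition at hand both suffice.

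One small expository point: the clause ``which converges strongly by the classical Hilbert--space parallelogram argument'' is redundant as written, since you have already stipulated $x_n-Vx_n\to v_V$. Either drop it, or phrase the choice as ``pick $(x_n)$ with $\|x_n-Vx_n\|\to\|v_V\|$'' and then invoke strict convexity of the norm (or simply note that $v_V\in\overline{\ran}(\Id-V)$ so such a sequence exists outright). The convexity of $\overline{\ran}(\Id-V)$ that you use is legitimate: $V$ is nonexpansive, hence $\Id-V$ is continuous and monotone with full domain, so maximally monotone, and the closure of its range is convex.
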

\begin{proof}
We start by proving that
if $S_1\colon X\to X$ and $S_2\colon X\to X$
are averaged\footnote{Let $S\colon X\to X$.
Then $S$ is $\alpha$-averaged if there exists
$\alpha\in [0,1[$ such that
$S=(1-\alpha)\Id+\alpha N$ and $N\colon X\to X$ 
is nonexpansive.}, 
then
\begin{equation}
\label{eq:eq:gaps}
v_{S_2S_1}=v_{S_1S_2}.
\end{equation}
To this end, let $x\in X$
and note that $S_2S_1$
 and $S_1S_2$ are $\alpha$-averaged 
where $\alpha\in [0,1[$ by, e.g., 
 \cite[Remark~4.34(iii)~and~Proposition~4.44]{BC2017}.
 Using \cite[Proposition~2.5(ii)]{Moursi17} 
 applied to 
 $S_2S_1$ and $S_1S_2$ yields
\begin{align}
\norm{v_{S_2S_1}-v_{S_1S_2}}^2
&\leftarrow \norm{(S_2S_1)^{n}x -(S_2S_1)^{n+1}x
-((S_1S_2)^{n}S_1x -(S_1S_2)^{n+1}S_1x)}^2
\nonumber\\
&=\norm{(S_2S_1)^{n}x -(S_2S_1)^{n+1}x-(S_1(S_2S_1)^{n}x 
-S_1(S_2S_1)^{n+1}x)   }^2
\nonumber\\
&=\norm{(\Id-S_1)(S_2S_1)^{n}x -(\Id-S_1)(S_2S_1)^{n+1}x }^2
\nonumber\\
&\le\tfrac{\alpha}{1-\alpha}( \norm{(S_2S_1)^{n}x -(S_2S_1)^{n+1}x}^2
-\norm{S_1(S_2S_1)^{n}x -S_1(S_2S_1)^{n+1}x}^2)
\nonumber\\
&\le \tfrac{\alpha}{1-\alpha}( \norm{(S_2S_1)^{n}x -(S_2S_1)^{n+1}x}^2
-\norm{(S_1S_2)^{n}S_1x -(S_1S_2)^{n+1}S_1x}^2)
\nonumber\\
&\to \tfrac{\alpha}{1-\alpha}( \norm{v_{S_2S_1}}^2-\norm{v_{S_1S_2}}^2)= 0,
\end{align}
where the last
identity follows from
\cite[Lemma~2.6]{Sicon2014}.
Because $T_{m-1}T_{m-2}\ldots T_1$ is averaged by
 \cite[Remark~4.34(iii)~and~Proposition~4.44]{BC2017}, 
we can and do apply \cref{eq:eq:gaps},
with $(S_1,S_2)$ replaced by $(T_{m-1}T_{m-2}\ldots T_1,T_m)$,
to deduce that 
$v_{T_mT_{m-1}\cdots  T_2T_1} =v_{T_{m-1}T_{m-2}\cdots
T_1T_{m}}$. 
The remaining identities follow similarly. 
\end{proof}
\begin{prop}
\label{prop:attain:2}
We have 
\begin{subequations}
\begin{align}
v_{T_mT_{m-1}\cdots T_1}\in \ran(\Id-T_mT_{m-1}\cdots T_1)
& \siff 
v_{T_{m-1}\cdots T_1T_m}\in \ran(\Id-T_{m-1}\cdots T_1T_m)
\label{eq:a}
\\
& \siff \cdots
\\
& \siff 
v_{T_1T_m\cdots T_2}\in \ran(\Id-T_1T_m\ldots T_2).
\end{align}
\end{subequations}
\end{prop}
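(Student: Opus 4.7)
The plan is to reduce the whole chain of equivalences to the single pairwise statement that, for any two averaged mappings $S_1,S_2\colon X\to X$,
\[
v_{S_2S_1}\in\ran(\Id-S_2S_1) \siff v_{S_1S_2}\in\ran(\Id-S_1S_2).
\]
Applying this with $S_1:=T_{m-1}\cdots T_1$ and $S_2:=T_m$ (both averaged by \cite[Remark~4.34(iii)~and~Proposition~4.44]{BC2017}) yields \cref{eq:a}; the remaining equivalences follow by iterating the same grouping cyclically, each time peeling off the next operator from the composition.

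For the pairwise statement, suppose $x\in X$ satisfies $x-S_2S_1x=v_{S_2S_1}$ and set $y:=S_1x$. Nonexpansiveness of $S_1$ gives
\[
\|y-S_1S_2y\|=\|S_1x-S_1(S_2S_1x)\|\le\|x-S_2S_1x\|=\|v_{S_2S_1}\|=\|v_{S_1S_2}\|,
\]
where the last equality is \cref{eq:eq:gaps} (whose proof in \cref{subeq:1st} is valid for arbitrary averaged $S_1,S_2$). Because $S_1S_2$ is nonexpansive, $\Id-S_1S_2$ is a Lipschitz continuous monotone operator with full domain, hence maximally monotone, so $\overline{\ran}(\Id-S_1S_2)$ is closed and convex. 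Consequently $v_{S_1S_2}=P_{\overline{\ran}(\Id-S_1S_2)}(0)$ is the \emph{unique} minimum-norm element of that set, and the displayed inequality forces $y-S_1S_2y=v_{S_1S_2}$; in particular $v_{S_1S_2}\in\ran(\Id-S_1S_2)$. The reverse implication is entirely symmetric (take $x:=S_2y$).

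The delicate point is the uniqueness step: a norm inequality alone does not identify a specific vector, and the trick is that convexity of $\overline{\ran}(\Id-S_1S_2)$ (combined with the defining property of $v_{S_1S_2}$ as the projection of the origin onto that set) upgrades the norm bound into an identity of vectors. This is precisely where the averaged-composition machinery behind \cref{subeq:1st} feeds in, so the main obstacle is to ensure that the identity $v_{S_2S_1}=v_{S_1S_2}$ used above really does extend from the individual $T_i$'s to the grouped averaged operators --- which it does, since the argument in \cref{subeq:1st} only uses that both operators are averaged.
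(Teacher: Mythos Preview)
Your proof is correct and follows essentially the same route as the paper: group the composition as $S_2S_1$, apply \cref{subeq:1st} to get $\|v_{S_2S_1}\|=\|v_{S_1S_2}\|$, push the attaining point through one factor via nonexpansiveness, and invoke uniqueness of the minimum-norm element of $\overline{\ran}(\Id-S_1S_2)$. The paper additionally sandwiches via the iterate $(S_2S_1)y$ (citing \cite[Proposition~2.5(iv)]{BM2015:AFF}), but your single one-sided bound together with the uniqueness of the projection already suffices, so your argument is in fact a mild streamlining of theirs.
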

\begin{proof}
We prove the implication 
``$\RA$'' of \cref{eq:a}:
Suppose that $(\exists y\in X)$
$v_{T_mT_{m-1}\cdots T_1}=y-T_mT_{m-1}\cdots T_1 y$, i.e., 
$y\in \fix(v_{T_m\cdots T_1}+T_m\cdots T_1)$.
By \cite[Proposition~2.5(iv)]{BM2015:AFF}, we have
$v_{T_mT\cdots T_1}=(T_m\cdots T_1)y-(T_m\cdots T_1)^2 y$.
Using 
\cref{subeq:1st}, we obtain 
\begin{align}
\norm{v_{T_{m-1}\cdots T_1T_m}}
&=\norm{v_{T_m\cdots T_2T_1}}
=\norm{(T_mT_{m-1}\cdots T_1)y-(T_mT_{m-1}\cdots T_1)^2 y}
\nonumber \\
&\le\norm{T_{m-1}\cdots T_1y-(T_{m-1}\cdots T_1T_m)T_{m-1}\cdots T_1 y}
\nonumber
\\
&\le \norm{y-T_mT_{m-1}\cdots T_1 y}=\norm{v_{T_mT_{m-1}\cdots T_1}}
=\norm{v_{T_{m-1}\cdots T_1T_m}}.
\end{align}
Consequently, 
$\norm{v_{T_{m-1}\cdots T_1T_m}}=\norm{T_{m-1}\cdots T_1
y-(T_{m-1}\cdots T_1T_m)T_{m-1}\cdots T_1 y}$
and hence
\begin{equation}
v_{T_{m-1}\ldots T_1T_m}
=T_{m-1}\cdots T_1 y-(T_{m-1}\cdots T_1T_m)T_{m-1}\ldots T_1 y
\in
\ran(\Id-T_{m-1}\ldots T_1T_m).
\end{equation}
The opposite implication and 
the remaining $m-2$ equivalences are proved similarly. 
\end{proof}

The following example, taken from De 
Pierro's \cite[Section~3~on~page~193]{DeP2000},
illustrates that
the conclusion of \cref{prop:attain:2}
does not necessarily hold if the operators
are permuted noncyclically. 

\begin{example}
\label{ex:attain:3}
Suppose that $X=\RR^2$,
$m=3$, 
$C_1=\RR\times\{0\}$,
$C_2=\RR\times\{1\}$,
$C_3=\menge{(x,y)\in \RR^2}{y\ge 1/x>0}$, and
$(T_1,T_2,T_3)=(P_{C_1},P_{C_2},P_{C_3})$.
Then, $v_{T_3T_2T_1}=v_{T_3T_1T_2}=0$,
$v_{T_3T_2T_1}\in \ran(\Id-T_3T_2T_1)$ 
but 
$v_{T_3T_1T_2}\not\in \ran(\Id-T_3T_1T_2)$.
\end{example}
\begin{proof}
Note that $T_2T_1=P_{C_2}P_{C_1}=P_{C_2}=T_2$
and 
$T_1T_2=P_{C_1}P_{C_2}=P_{C_1}=T_1$.
Consequently,
$(T_3T_2T_1,T_3T_1T_2)=(P_{C_3}P_{C_2},P_{C_3}P_{C_1})$.
The claim that 
$v_{T_3T_2T_1}=v_{T_3T_1T_2}=0$
follows from \cite[Theorem~3.1]{Bau03},
or \cref{cor:comp}
applied with $m=3$.
This and \cite[Lemma~2.2(i)]{BB94} imply that
$\fix T_3T_2T_1=\fix P_{C_3}P_{C_2}=C_3\cap C_2\neq \fady$,
whereas  
$\fix T_3T_1T_2=\fix P_{C_3}P_{C_1}=C_3\cap C_1= \fady$.
Hence, 
$v_{T_3T_2T_1}\in \ran(\Id-T_3T_2T_1)$ 
but 
$v_{T_3T_1T_2}\not\in \ran(\Id-T_3T_1T_2)$.
\end{proof}
\begin{figure}[H]
\begin{center}
\includegraphics[scale=0.45]{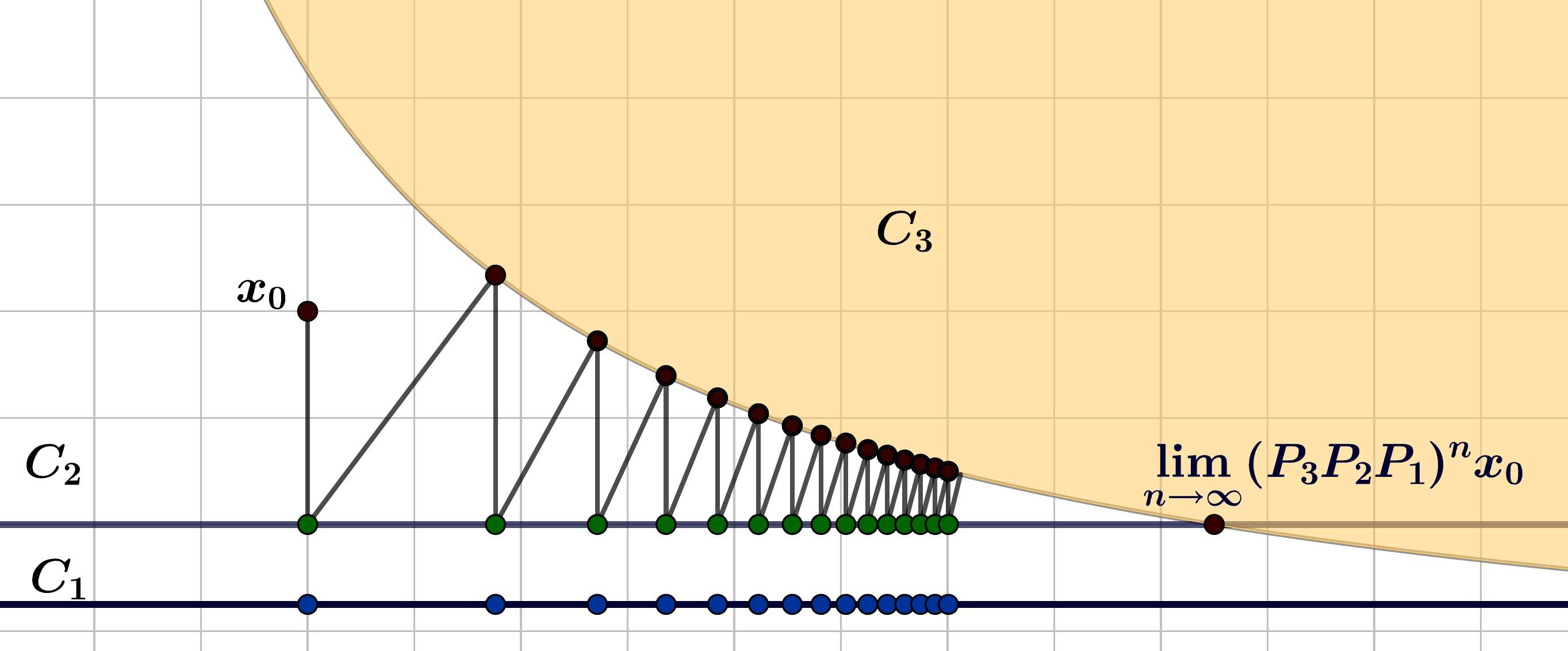}
\end{center}
\caption{A \url{GeoGebra} \cite{geogebra} snapshot that 
illustrates the behaviour of the sequence 
$((P_3P_2P_1)^nx_0)_\nnn$ in \cref{prop:attain:2}.
The first few iterates of the 
sequences 
$(P_1(P_3P_2P_1)^nx_0)_\nnn$ (blue points),
$(P_2P_1(P_3P_2P_1)^nx_0)_\nnn$ (green points),
 and 
 $((P_3P_2P_1)^nx_0)_\nnn$ (black points)
 are also depicted.
}
\end{figure}
\begin{figure}[H]
\begin{center}
\includegraphics[scale=0.45]{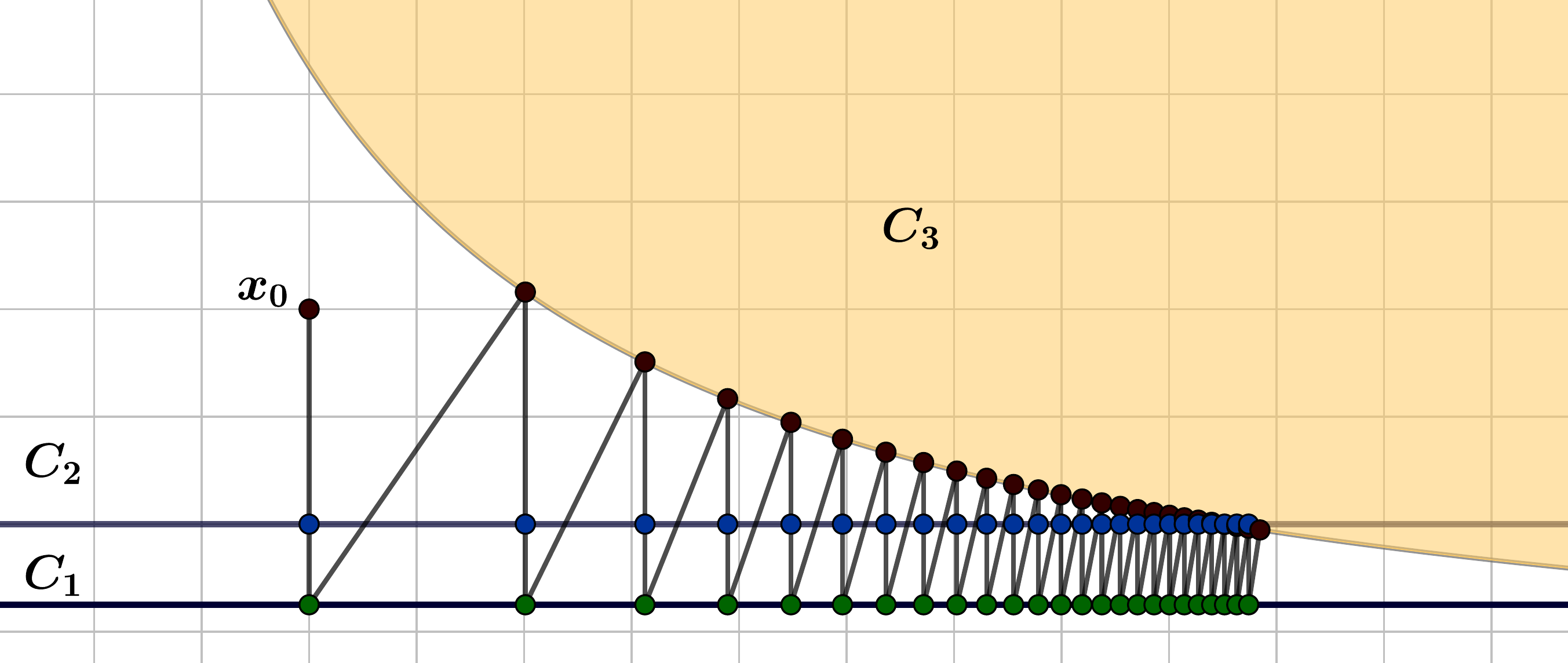}
\end{center}
\caption{A \url{GeoGebra} \cite{geogebra} snapshot that 
illustrates the behaviour of the sequence 
$((P_3P_1P_2)^nx_0)_\nnn$ in \cref{prop:attain:2}.
The first few iterates of the 
sequences 
$(P_1(P_3P_1P_2)^nx_0)_\nnn$ (green points),
$(P_2P_1(P_3P_1P_2)^nx_0)_\nnn$ (blue points),
 and 
 $((P_3P_1P_2)^nx_0)_\nnn$ (black points)
 are also depicted.
}
\end{figure}

\section{Convex Combinations}
\label{s:convco}

We start with the following useful lemma.

\begin{lem}
\label{lem:n:3*mono}
Suppose 
$(\forall i\in I)$ 
$A_i$ is $3^*$ monotone\footnote{We recall that
a monotone operator $B\colon X\To X$ is 
\emph{3* monotone} (see \cite{Br-H})
(this is also known as \emph{rectangular}) if 
$(\forall(x,y^*)\in\dom B\times\ran B)$
$\sup_{(z,z^*)\in\gr B} \scal{x-z}{z^*-y^*}<\pinf$.}  
and $\dom A_i=X$. 
Let $(\alpha_i)_{i\in I}$ be 
 a family of nonnegative real numbers.
Then the following hold:
\begin{enumerate}
 \item
 \label{lem:n:3*mono:i}
 $\sum_{i\in I}\alpha_i A_i$ is maximally monotone,
 $3^*$ monotone
and  $\dom\left(\sum_{i\in I}\alpha_i A_i\right)=X$.
 \item
  \label{lem:n:3*mono:ii}
 $\overline{\ran}(\sum_{i\in I}\alpha_i A_i)
 =\overline{\sum_{i\in I}\alpha_i\ran A_i} $.
\end{enumerate}
\end{lem}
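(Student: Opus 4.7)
The statement combines standard facts about $3^*$-monotone maximally monotone operators collected in \cite{Br-H}. I will treat (i) and (ii) in turn; the strategy in each case is to reduce to the two-operator situation and then induct on $|I|=m$.

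\textbf{For (i).} The $\alpha_i=0$ terms correspond to the zero operator on $X$, which is trivially maximally monotone, $3^*$-monotone, and full-domain, so they may be dropped. It therefore suffices to prove that if $B,C\colon X\To X$ are each maximally monotone, $3^*$-monotone, and full-domain, and if $\lambda>0$, then (a)~$\lambda B$ retains all three properties, and (b)~so does $B+C$. Claim (a) is immediate from the three defining inequalities by positive homogeneity. For claim (b), $\dom(B+C)=\dom B\cap\dom C=X$; maximality of $B+C$ is Rockafellar's sum theorem, whose qualification hypothesis is vacuous since $\dom B=X$; and preservation of $3^*$-monotonicity under sums is part of the Brezis--Haraux theorem. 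A finite induction on the number of nonzero $\alpha_i$'s then finishes (i).

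\textbf{For (ii).} The inclusion ``$\subseteq$'' requires no hypothesis: if $y\in\bigl(\sum_{i\in I}\alpha_i A_i\bigr)(x)$, then $y=\sum_{i\in I}\alpha_i y_i$ with $y_i\in A_ix\subseteq\ran A_i$, so $y\in\sum_{i\in I}\alpha_i\ran A_i$. For the nontrivial inclusion ``$\supseteq$'', I invoke the Brezis--Haraux range theorem: if $B,C$ are maximally monotone and $3^*$-monotone and $\dom B\cap\inte\dom C\neq\varnothing$, then $\overline{\ran(B+C)}=\overline{\ran B+\ran C}$. Part (i) guarantees that every partial sum $\sum_{j=1}^{k}\alpha_j A_j$ is itself maximally monotone, $3^*$-monotone, and full-domain, so the Brezis--Haraux hypothesis is automatically satisfied at each stage. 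A straightforward induction on $|I|$ then yields the desired identity of closures.

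\textbf{Main obstacle.} There is no substantive obstacle; the proof amounts to a careful assembly of the Brezis--Haraux results. The one subtlety is remembering to invoke part (i) before each inductive step in (ii), so that the $3^*$-monotonicity and full-domain hypotheses needed by Brezis--Haraux persist throughout the induction.
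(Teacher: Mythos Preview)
Your proposal is correct and follows essentially the same route as the paper: for \ref{lem:n:3*mono:i} both arguments reduce to the two-operator case (Rockafellar's sum theorem for maximal monotonicity, the standard fact that sums of $3^*$-monotone operators remain $3^*$-monotone) and induct; for \ref{lem:n:3*mono:ii} the paper invokes Pennanen's range result \cite[Corollary~6]{Penn01} directly on top of \ref{lem:n:3*mono:i}, whereas you obtain the same conclusion by applying the two-operator Brezis--Haraux range identity iteratively, which is exactly what Pennanen's corollary encapsulates. The only detail to make explicit in your inductive step for \ref{lem:n:3*mono:ii} is the elementary closure inclusion $\overline{S}+T\subseteq\overline{S+T}$, needed when passing from $\overline{\ran}\big(\sum_{j\le k}\alpha_jA_j\big)$ back to the unclosed sum of ranges.
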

\begin{proof}
Note that $(\forall i\in I)$, 
$\alpha_iA_i$ is
maximally monotone, 
$3^*$ monotone and 
$\dom \alpha_iA_i=X$.
  
\ref{lem:n:3*mono:i}:
The proof proceeds  by induction.
For $n=2$, 
the $3^*$ monotonicity of $\alpha_1A_1+\alpha_2A_2$
follows from \cite[Proposition~25.22(ii)]{BC2017}, 
whereas the maximal monotonicity of $\alpha_1A_1+\alpha_2A_2$
follows from, e.g., \cite[Proposition~25.5(i)]{BC2017}.
Now suppose that for some $n\ge 2$ it holds that
 $\sum_{i=1}^{n}\alpha_iA_i$ is maximally monotone and 
 $3^*$ monotone.
 Then $\sum_{i=1}^{n+1}\alpha_iA_i=\sum_{i=1}^{n}\alpha_iA_i
 +\alpha_{n+1}A_{n+1}$,
 which is maximally monotone
  and $3^*$ monotone, where the conclusion follows from
  applying the base case 
  with $(\alpha_1,\alpha_2,A_1,A_2)$ replaced by
  $(1,\alpha_{n+1},\sum_{i=1}^{n}\alpha_iA_i,A_{n+1})$.
  
\ref{lem:n:3*mono:ii}:
Combine \ref{lem:n:3*mono:i}
 and \cite[Corollary~6]{Penn01}.
\end{proof}

From this point onwards, let 
$(\lam_i)_{i\in I}$
be 
in $\left]0,1\right]$ with $\sum_{i\in I}\lam_i=1$, and set
\begin{empheq}[box=\mybluebox]{equation}
\overline{T} := \sum_{i\in I} \lambda_i T_i.
\end{empheq}

We are now ready for our second main result.

\begin{thm}
\label{cor:min:v}
$\norm{v_{\overline{T}}}
\le 
\norm{\sum_{i\in I}\lam_iv_{T_i}}$.
\end{thm}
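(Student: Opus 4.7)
The plan is to reduce the theorem to establishing the inclusion
\begin{equation*}
\overline{v} \,:=\, \sum_{i\in I}\lam_i v_i \,\in\, \overline{\ran}(\Id - \overline{T}),
\end{equation*}
from which the bound follows immediately, since $v_{\overline{T}} = P_{\overline{\ran}(\Id-\overline{T})}(0)$ is, by definition, the point of $\overline{\ran}(\Id-\overline{T})$ closest to $0$, so that $\norm{v_{\overline{T}}} \le \norm{\overline{v} - 0} = \norm{\overline{v}}$.

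To prove this inclusion, I would apply Lemma~\ref{lem:n:3*mono}(ii) to the family $B_i := \Id - T_i$ with weights $\alpha_i := \lam_i$. The first step is to check its hypotheses for each $B_i$. Since $T_i$ is firmly nonexpansive on $X$, a direct expansion shows that $B_i = \Id - T_i$ is firmly nonexpansive as well (hence $1$-cocoercive and maximally monotone) with $\dom B_i = X$. For the $3^*$ monotonicity of $B_i$, fix $x\in X$ and $y^* = B_i w \in \ran B_i$, and write
\begin{equation*}
\scal{x - z}{B_iz - y^*} \,=\, \scal{x - w}{B_iz - B_iw} \,-\, \scal{z - w}{B_iz - B_iw};
\end{equation*}
bounding the first summand by Cauchy--Schwarz and the second by $1$-cocoercivity reduces the whole expression to $\norm{x - w}\,t - t^2$ in $t := \norm{B_iz - B_iw} \ge 0$, whose maximum is $\tfrac{1}{4}\norm{x-w}^2 < \infty$, giving the required uniform bound over $z$.

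With the hypotheses verified, Lemma~\ref{lem:n:3*mono}(ii) yields
\begin{equation*}
\overline{\ran}(\Id - \overline{T}) \,=\, \overline{\ran}\Bigl(\sum_{i\in I}\lam_i B_i\Bigr) \,=\, \overline{\sum_{i\in I}\lam_i \ran B_i},
\end{equation*}
and since each $v_i \in \overline{\ran B_i}$, choosing $u_i^{(n)} \in \ran B_i$ with $u_i^{(n)} \to v_i$ gives $\sum_{i\in I}\lam_i u_i^{(n)} \in \sum_{i\in I}\lam_i \ran B_i$ converging to $\overline{v}$, which completes the proof. The main (mildly technical) obstacle is the $3^*$-monotonicity verification for $B_i$; although this is standard Yosida-approximation material (since $B_i = {}^{1}\!A_i$), the short cocoercivity computation sketched above keeps that step self-contained and avoids extra references.
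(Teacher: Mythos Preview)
Your proof is correct and follows essentially the same route as the paper: both arguments apply Lemma~\ref{lem:n:3*mono}\ref{lem:n:3*mono:ii} to the family $B_i=\Id-T_i$ with weights $\lambda_i$ to obtain $\overline{\ran}(\Id-\overline{T})=\overline{\sum_{i\in I}\lambda_i\ran(\Id-T_i)}$, then observe that $\sum_{i\in I}\lambda_i v_i$ lies in this set and invoke the minimality of $v_{\overline T}$. The only difference is cosmetic: the paper dispatches the $3^*$ monotonicity of $\Id-T_i$ by citing \cite[Examples~20.7~and~25.20]{BC2017}, whereas you supply a short self-contained cocoercivity computation---your bound $\langle x-z,B_iz-y^*\rangle\le\|x-w\|\,t-t^2\le\tfrac14\|x-w\|^2$ is correct and is exactly the standard argument behind that citation.
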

\begin{proof}
It follows from  \cite[Examples~20.7~and~25.20]{BC2017}
that $(\forall i\in I)$
$\Id-T_i$ is maximally monotone, $3^*$ monotone
 and $\dom(\Id-T_i)=X$.
This and 
 \cref{lem:n:3*mono}\ref{lem:n:3*mono:ii}
 (applied with $(\alpha_i, A_i)$
 replaced by $(\lam_i,\Id-T_i)$
imply that
  \begin{equation}
  \label{prop:conv:comb}
\overline{\ran}\left(\Id-\overline{T}\right)
=\overline{\ran}\sum_{i\in I}\lam_i (\Id-T_i)
= \overline{\sum_{i\in I}\lam_i\ran(\Id-T_i)}.
\end{equation}
Now, 
on the one hand, it follows from the definition of 
$v_{\overline{T}}$ that
\begin{equation}
\label{eq:conv:bnd}
\left(\forall y\in \overline{\ran}\left(\Id-\overline{T}\right)\right)
\qquad\norm{ v_{\overline{T}}}\le \norm{y}.
\end{equation}
On the other hand, 
the definition of $v_i$
implies that
$(\forall i\in I)$
$v_i\in \overline{\ran}(\Id-T_i) $.
Hence,  
$\lam_iv_i\in \lam_i\ \overline{\ran}(\Id-T_i)$.
Therefore, 
$\sum_{i\in I}\lam_iv_i\in \sum_{i\in I}\lam_i\ \overline{\ran} (\Id-T_i)
\subseteq \overline{ \sum_{i\in I}\lam_i\ \ran (\Id-T_i)}
=\overline{\ran}\left(\Id-\overline{T}\right)$,
where the last identity follows from 
\cref{prop:conv:comb}. 
Now apply \cref{eq:conv:bnd}
with $y$ replaced by
$\sum_{i\in I}\lam_iv_{i}$.
\end{proof}

As an easy consequence of \cref{cor:min:v},
we obtain the second main result of \cite{BMMW12}:

\begin{cor}
\label{cor:Victoria:1:convcom}
{\rm \cite[Theorem~5.5]{BMMW12}}
Suppose that $v_1=\cdots=v_m=0$.
Then $v_{\overline{T}} 
=0$.
\end{cor}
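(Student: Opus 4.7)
The plan is to deduce this corollary as an immediate application of Theorem~\ref{cor:min:v}, which provides the bound
\[
\norm{v_{\overline{T}}}\le \Big\|\sum_{i\in I}\lambda_i v_{T_i}\Big\|.
\]
Under the hypothesis $v_1=\cdots=v_m=0$, every term in the convex combination on the right-hand side vanishes, so the bound collapses to $\norm{v_{\overline{T}}}\le 0$, which forces $v_{\overline{T}}=0$.

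Concretely, I would first invoke Theorem~\ref{cor:min:v} applied to the given family $(T_i)_{i\in I}$ and weights $(\lambda_i)_{i\in I}$. Next I would substitute $v_{T_i}=v_i=0$ for each $i\in I$, yielding $\sum_{i\in I}\lambda_i v_{T_i}=0$ in $X$. Taking norms and using nonnegativity of $\norm{v_{\overline{T}}}$ then gives the conclusion $v_{\overline{T}}=0$. There is no real obstacle: all of the work—maximal monotonicity of $\Id-T_i$, $3^*$ monotonicity, and the range identity in \cref{prop:conv:comb}—has already been done in the proof of Theorem~\ref{cor:min:v} via Lemma~\ref{lem:n:3*mono}, so this corollary is a one-line specialization of that quantitative estimate to the asymptotically regular case.
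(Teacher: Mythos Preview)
Your proposal is correct and matches the paper's approach exactly: the paper presents this corollary as an immediate consequence of Theorem~\ref{cor:min:v}, and your argument---substituting $v_i=0$ into the bound $\norm{v_{\overline{T}}}\le\norm{\sum_{i\in I}\lambda_i v_{T_i}}$ to force $v_{\overline{T}}=0$---is precisely that specialization.
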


 The bound we provided in \cref{cor:min:v}
is sharp as we illustrate now:

\begin{example}
\label{ex:convcom:bound}
Let $a\in X$ 
and suppose that 
$T\colon X \to X\colon x\mapsto x-a$.
Then $v_T=a$
 and therefore $\fix T\neq\fady \siff a= 0$.
Set $(\forall i\in I)$ $T_i=T$.
Then $\overline{T}=\sum_{i\in I}\lam_iT_i=T$,
 $(\forall i\in I)$
$v_i=v_{\overline{T}}=a$.
Consequently, 
$\norm{v_{\overline{T}}}=\norm{a}=\norm{\sum_{i\in I}\lam_ia}
=\norm{\sum_{i\in I}\lam_iv_i}$.
\end{example}

\cref{ex:convcom:bound}
suggests that 
the identity $v_{\overline{T}}=\sum_{i\in I}\lam_iv_i$ holds 
true; however, 
the following example provides a negative answer 
to this conjecture.

\begin{example}
Suppose that $m=2$, 
that $T_1\colon X\to X\colon x\mapsto x-a_1$, and that
$T_2\colon X\to X\colon x\mapsto \tfrac{1}{2}x-a_2$,
where $(a_1,a_2)\in (X\smallsetminus\{0\})\times X$. 
Then 
 $\ran (\Id-T_1)=\{a_1\}$,
  $\ran (\Id-T_2)=X$,
  $\ran (\Id-\overline{T})=X$, 
  and 
$0=v_{\overline{T}}\neq\lam_1 v_1+\lam_2 v_2=\lam_1 a_1$.
\end{example}  
\begin{proof}
On the one hand, one can easily verify that 
$(v_1,v_2)=(a_1,0)$; hence,
$\lam_1 v_1+\lam_2 v_2=\lam_1 a_1\neq 0$.
On the other hand, 
$\overline{T}\colon X\to X\colon x\mapsto 
\tfrac{\lam_1+1}{2}x-(\lam_1 a_1+\lam_2 a_2)$.
Hence, $\overline{T}$ is a Banach contraction,
and therefore, $\fix\overline{T}\neq \fady$.
Consequently, $v_{\overline{T}}=0$.  
\end{proof}

\section*{Acknowledgments}
The research of HHB was partially supported by a Discovery Grant
of the Natural Sciences and Engineering Research Council of
Canada. 
WMM was supported by the Simons Institute for the Theory
of Computing research fellowship.

\end{document}